\newcommand{\vs}{\vspace{3mm}}
\newcommand{\N}{\mathbb{N}}
\title{On the Length of Pierce Expansions}
\author{Zachary Chase \and Mayank Pandey}
\thanks{The first author is partially supported by Ben Green's Simons Investigator Grant 376201 and gratefully acknowledges the support of the Simons Foundation.}
\address{Mathematical Institute, Andrew Wiles Building, Radcliffe Observatory Quarter, Woodstock Road, Oxford OX2 6GG, UK}
\email{zachary.chase@maths.ox.ac.uk}
\address{Department of Mathematics, Princeton University, Princeton, NJ 08540, USA}
\email{mayankpandey9973@gmail.com}
\date{October 5, 2022}
\begin{document}

\begin{abstract}
For a given positive integer $n$, how long can the process $x \mapsto n \pmod{x}$ last before reaching $0$? We improve Erd\H{o}s and Shallit's upper bound of $O(n^{\frac{1}{3}+\eps})$ to $O(n^{\frac{1}{3}-\frac{2}{177}+\eps})$ for any $\eps > 0$.
\end{abstract}

\maketitle

\section{Introduction}

The continued fraction expansion of a real number $x \in (0,1)$, given by $$x = \frac{1}{a_1+\frac{1}{a_2+\dots}},$$ plays an important role throughout number theory. The terms $a_i$ can be extracted, for example, from the iterated process $t \mapsto \frac{1}{t} \pmod{1}$ beginning with $t=x$. It is well-known and not difficult to see that the continued fraction expansion of a real number $x$ is finite if and only if $x$ is a rational number. And if $x$ is rational, the sequence of terms $a_i$ produced are exactly the quotients produced by the classic Euclidean algorithm applied to the numerator and denominator.    

\vs

In this paper, we are concerned with the \textit{Pierce expansion} of a real number $x \in (0,1)$, introduced by Pierce \cite{pierce} and named by Shallit \cite{shallit}. Here, the expansion is of the form $$x = \frac{1}{b_1}-\frac{1}{b_1b_2}+\frac{1}{b_1b_2b_3}-\dots,$$ where now the terms $b_i$ can be extracted from the iterated process $t \mapsto 1 \pmod{t}$ beginning with $t=x$. It is also not difficult to see that the Pierce expansion of a real number $x$ is finite if and only if $x$ is rational (see, e.g., \cite{shallit}). And if $x$ is rational, the sequence of terms $b_i$ produced are exactly the quotients produced by an algorithm that at first glance appears similar to Euclid's algorithm.

\vs

Let us give an example of the algorithm. Say $x = \frac{13}{35}$. We start with $13$ and repeatedly obtain successive integers by reducing $35$ modulo the current number. For example, 
$$35 = 2\cdot 13 + 9$$ $$\hspace{-2mm} 35 = 3\cdot 9 + 8$$ $$\hspace{-2mm} 35 = 4\cdot 8 + 3$$ $$35 = 11\cdot 3 + 2$$ $$35 = 17\cdot 2 + 1$$ $$35 = 35\cdot 1 + 0$$ gives rise to $$\frac{13}{35} = \frac{1}{2}-\frac{1}{2\cdot 3}+\frac{1}{2\cdot3\cdot4}-\frac{1}{2\cdot3\cdot4\cdot11}+\frac{1}{2\cdot3\cdot4\cdot11\cdot17}-\frac{1}{2\cdot3\cdot4\cdot11\cdot17\cdot 35}.$$ 

\vs

Motivated by the known fact that the Euclidean algorithm used to divide a positive integer $a$ by a positive integer $n$ terminates after $O(\log n)$ steps (which is sharp), it is natural to ask how quickly the above algorithm must terminate 
for a given denominator, no matter the numerator.

\vs

To this end, for positive integers $a,n \in \N$, define $P(a,n)$ to be the first positive integer $k$ such that $a_k = 0$, where $a_0 := a$ and $a_{j+1} = n \pmod{a_j} \in \{0,1,\dots,a_j-1\}$  for $j \ge 0$. In the above example we have $P(a,n) = P(13,35) = 6$. Since we only concern ourselves with the ``length" of the algorithm, we need not keep track of quotients and may compress for instance the above example to $$\hspace{2mm} 35 \pmod{13} = 9 \vspace{-2mm} $$ $$35 \pmod{9} = 8 \vspace{-1.5mm}$$ $$35 \pmod{8} = 3 \vspace{-1.5mm}$$ $$35 \pmod{3} = 2 \vspace{-1.5mm}$$ $$35 \pmod{2} = 1 \vspace{-1.5mm}$$ $$ \hspace{1mm} 35 \pmod{1} = 0.$$  

\vspace{1.5mm}

\noindent Noting $P(a,n) = 2$ if $a > n$, we set $$P(n) := \max_{1 \le a \le n} P(a,n).$$ 
The problem we consider that of obtaining bounds on $P(n)$. Shallit \cite{shallit} proved, using purely ``Archimedean" arguments, that 
$P(n)\ll n^{\frac{1}{2}}$ (see \S\ref{sec:notation} for 
our conventions regarding Vinogradov notation), while 
\[
\limsup_{n\to\infty} \frac{P(n)}{\log n/\log\log n} > 0.
\] 
The upper bound was improved by Erd\H{o}s and Shallit \cite{erdosshallit} who  leveraged
``arithmetic" arguments to combine with the previous ``Archimedean" ones. 
They established $P(n)\ll n^{\frac{1}{3}+\eps}$ and also
improved the lower bound to $\limsup_{n\to\infty} P(n)/\log n > 0$. 
These bounds have since remained the state of the art, with the exponent $1/3$ representing a natural barrier. 


\vs

In this paper, we improve the upper bound on $P(n)$, (slightly) pushing past the $1/3$ barrier. 

\vspace{1.5mm}

\begin{theorem}\label{main}
We have 
\[
    P(n)\ll n^{\frac{1}{3}-\frac{2}{177}+\eps}.
\]
\end{theorem}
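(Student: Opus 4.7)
The plan is to follow the dyadic decomposition used by Erd\H{o}s and Shallit. Writing $N_T$ for the number of indices $j$ with $a_j \in [T, 2T]$ (a consecutive block, since $a_j$ is strictly decreasing), one has $L = \sum_T N_T$ with $O(\log n)$ dyadic scales in play. Two elementary bounds are always available: the Archimedean bound $N_T \le T$ coming from distinctness, and a ``quotient'' bound $N_T \ll n/T$ from the observation that $q_j := \lfloor n/a_j \rfloor$ is strictly increasing in $j$ (if $q_j = q$, then $a_{j+1} = n - qa_j \le n/(q+1)$, which forces $q_{j+1} \ge q+1$) together with the fact that $q_j \in [n/(2T), n/T]$ whenever $a_j \in [T, 2T]$. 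Combining these alone recovers Shallit's bound $L \ll n^{1/2}$.

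The Erd\H{o}s--Shallit improvement to $n^{1/3+\eps}$ then comes from leveraging the divisibility $a_j \mid n - a_{j+1}$: this restricts the admissible transitions $(a_j, a_{j+1}) \in [T, 2T]^2$ further (at most $T \cdot n^{\eps}$ pairs via a divisor-average argument), and when the resulting arithmetic bound is combined with the two baseline bounds and optimised dyadically, it delivers $N_T \ll n^{1/3+\eps}$ near the critical scale $T \asymp n^{1/3}$. Summing over scales gives $L \ll n^{1/3+\eps}$, with $T \asymp n^{1/3}$ as the bottleneck.

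To push past this $1/3$ barrier, I would exploit long runs of ``slow'' steps, i.e.~steps with $a_{j+1} > a_j/2$. Such steps are characterised by the dual divisibility $a_j \mid n + t_j$ with $t_j := a_j - a_{j+1} \in (0, a_j/2)$, so a run of $k$ slow steps at scale $T$ encodes a chain of $k$ divisibility conditions $a_i \mid n + t_i$ with small shifts $t_i \in (0, T)$. Counting such configurations can be expressed, after Fourier expansion of the divisibility indicator followed by a Cauchy--Schwarz step, as the estimation of an exponential sum of the shape $\sum_{a \sim T} e(cn/a)$ or a bilinear analogue. A van der Corput / exponent-pair estimate then produces a saving of the form $n^{-c}$ over the trivial divisor bound, which after dyadic optimisation translates into a power-saving improvement of the final exponent.

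The hardest step will be to extract this exponential-sum saving uniformly across the ranges of parameters at play --- the dyadic scale $T$, the run length $k$, and the dual frequency introduced by the Fourier expansion --- while also absorbing the combinatorial losses from the Cauchy--Schwarz step used to pass from a correlation-type count back to the bound on $N_T$. The specific exponent $\tfrac{2}{177}$ in the final bound presumably reflects the outcome of a delicate multi-parameter optimisation balancing the exponent-pair saving against the Archimedean and quotient bounds, with the oddness of the denominator suggesting that several numerical constraints simultaneously bind at the optimum.
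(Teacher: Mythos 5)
Your broad strategy is right in outline --- the $1/3$ barrier is broken by extracting arithmetic information from consecutive slow steps (where $a_j | n + t_j$ with $t_j$ small), reducing to an exponential sum of shape $\sum_{b \sim n^{1/3}} e(n/b)$, and applying exponent-pair estimates (the paper uses Bourgain's $(\tfrac{13}{84}+\eps, \tfrac{55}{84}+\eps)$). But the proposed \emph{mechanism} for getting from the combinatorial count to that exponential sum has a genuine gap. Invoking ``Fourier expansion of the divisibility indicator followed by a Cauchy--Schwarz step'' over runs of $k$ slow steps is not something that is visibly going to produce a usable saving: adding more divisibility conditions also adds more free parameters ($k$ shifts $t_i$, $k$ quotients), and it is not at all clear how Cauchy--Schwarz resolves this; the paper uses neither an expansion of the divisibility indicator by additive characters nor Cauchy--Schwarz.

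What the paper actually does is much more targeted. It fixes exactly \emph{two} consecutive small drops: writing $a = a_{j-2}$, $a - h = a_{j-1}$, $a - h - h' = a_j$, one gets $a \mid n + h$ and $a - h \mid n + h'$. Introducing the quotient $b$ with $ab = n + h$ and a small quotient increment $k$ with $(a - h)(b + k) = n + h'$, a short computation yields the crucial constraint
\[
h = \frac{nk}{b(b+k)} + O\!\left(\frac{AH}{n}\right),
\]
which pins $h$ to a narrow window of width $\asymp A H / n$ around $H_0(b,k) := nk/(b(b+k))$ for each $(b,k)$. The count of pairs is then written as $\sum_k \sum_b \sum_{h \equiv -n \ (b)} w\big((h - H_0(b,k))/L\big)$ and Poisson summation is applied \emph{in the shift $h$}, not by expanding the indicator of divisibility. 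This produces a zero-frequency main term (which is acceptable for $\lambda > 2\delta$) plus, for $r \neq 0$, an exponential sum $\sum_{b \asymp n/A} e\big(r(n + H_0(b,k))/b\big)$ whose phase has the right $b$-derivatives to make the exponent pair applicable. The reparametrization via $(b,k)$ and the constraint on $h$ is the key missing idea in your sketch: without it there is no analytically tractable phase, and ``long runs'' for $k>2$ are neither used nor obviously helpful. The specific value $2/177$ then comes out of optimising $\delta$ and $\lambda$ against the three competing bounds: the Archimedean bound, the $r=0$ contribution, and the exponent-pair saving on the $r \ne 0$ contribution.
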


\vspace{1.5mm}

We did not put substantial effort into optimizing the exponent gain achieved in Theorem \ref{main}; we could 
not, however, see a way to improve the upper bound to $P(n) \ll n^\eps$ using our techniques.

\vs

Secondly, we establish a lower bound that applies to all $n \in \N$. 
As we can tell, the best bound known prior was $P(n) \gg \log\log n$. 

\vspace{1.5mm}

\begin{theorem}\label{lowerbound}
We have the lower bound
\[
    P(n)\gg\frac{\log n}{\log\log n}
\] 
for all sufficiently large $n$.
\end{theorem}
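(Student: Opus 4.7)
The plan is to construct, for each sufficiently large $n$, an integer $a \le n$ whose Pierce expansion modulo $n$ has length at least $c \log n/\log\log n$ for some absolute constant $c > 0$. The first step is to reverse the Pierce trajectory $a = a_0 > a_1 > \cdots > a_L = 0$ by setting $b_j := a_{L-j}$; the recursion $a_{j+1} = n \bmod a_j$ then translates into producing an \emph{ascending} chain
\[
0 = b_0 < b_1 < b_2 < \cdots < b_L = a
\]
satisfying $b_j \mid n - b_{j-1}$ for every $j = 1, \ldots, L$. Thus it suffices to construct such an ascending chain with $b_L \le n$ and $L$ as large as possible.

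To achieve this, I would proceed greedily: take $b_1 = 1$ (which trivially divides $n - b_0 = n$) and, given $b_{j-1}$, choose $b_j$ to be a divisor of $n - b_{j-1}$ just barely exceeding $b_{j-1}$. If at every step I can guarantee the growth bound $b_j \le b_{j-1}\,(\log n)^{O(1)}$, then after $L$ iterations $b_L \le (\log n)^{O(L)}$, which remains at most $n$ precisely when $L = \Omega(\log n/\log\log n)$, yielding the desired bound $P(n) \ge L$. The naive counting estimate $|\{a : P(a,n) \le L\}| \le (\max_{m \le n} d(m))^L d(n)$ only reaches $L \gg \log\log n$, so a genuine constructive improvement is needed.

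The hard part will be justifying this step-size claim: for each $b$ encountered, does $n - b$ admit a divisor in the window $\bigl(b,\, b(\log n)^{O(1)}\bigr]$? For generic $m = n - b$, those with many moderate-size prime factors—accounting for almost all integers in view of the mean $\sum_{m \le x} d(m) \asymp x \log x$—the divisor structure makes this routine. The exceptional $m$, in particular when $n - b$ is prime or almost prime, are the real obstruction: there the immediate greedy choice fails. To handle them, I would exploit the flexibility of the construction: at a typical step there are many valid divisors of $n - b_{j-1}$, and a lookahead, backtracking, or averaging argument can ensure that at least one choice both fits the growth window and avoids a pathological value of $n - b_j$ at the next step. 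Combining such a greedy scheme with careful case analysis for obstructed steps should give the required lower bound $P(n) \gg \log n/\log\log n$ for all sufficiently large $n$.
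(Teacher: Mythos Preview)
Your reversal of the Pierce trajectory into an ascending divisor chain $0=b_0<b_1<\cdots<b_L$ with $b_j\mid n-b_{j-1}$ is correct, but what follows is a strategy, not a proof: the entire argument rests on the unproven assertion that for every sufficiently large $n$ one can repeatedly find a divisor of $n-b$ in the window $\bigl(b,\, b(\log n)^{O(1)}\bigr]$, or else sidestep a failure by backtracking. You correctly identify the obstruction---when $n-b$ is prime or nearly so there is no such divisor---but ``lookahead, backtracking, or averaging'' is a hope, not an argument. The difficulty is genuine: if $n$ is prime you are forced to take $b_1=1$, and the subsequent integers $n-1,\,n-b_2,\,n-b_3,\ldots$ are specific numbers whose multiplicative structure you have no control over; nothing prevents several of them in a row from having all nontrivial divisors far outside the desired window. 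Controlling the divisor structure of such shifted values simultaneously, for \emph{every} $n$, is precisely the kind of statement for which no tools exist, and your sketch offers no mechanism to do it.

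The paper's proof bypasses arithmetic entirely. It fixes the explicit starting value $a_0=\lfloor(1-e^{-1})n\rfloor$ and shows by a purely Archimedean calculation that $\lfloor n/a_j\rfloor=j+1$ for the first $c\log n/\log\log n$ steps: inductively $a_k$ stays within $k!$ of the real sequence $(-1)^k k!\bigl(\sum_{j\le k}(-1)^j/j!-e^{-1}\bigr)n$, and an elementary estimate on the tail of the series for $e^{-1}$ forces $a_k>n/(k+2)$, hence $a_{k+1}=n-(k+1)a_k$. No information about divisors of $n$ or of any $n-b$ is ever used.
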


\vspace{1.5mm}

As one can see, there is an exponential gap between the best known lower and upper bounds on $P(n)$. We hope this paper will reignite interest in determining the true asymptotics and related questions. 

\vs

In \S\ref{sec:notation}, we specify the notational conventions 
we use throughout the paper. In \S\ref{sec:pf_main}, we give the proof of our main theorem, Theorem \ref{main}. In \S\ref{sec:pf_lower}, we give the proof of the lower bound, Theorem \ref{lowerbound}. 

\vspace{1.5mm}

\section{Notation}
\label{sec:notation}

Any statement involving $\eps$ should be read to mean that the statement holds for all $\eps > 0$
We use the standard Vinogradov notation, in which
we write $A \ll B$ (and equivalently $B \gg A$) to denote that
$|A| \le C B$ for some implied constant $C > 0$ that depends only on $\eps$ (if $A, B$ depend on it). 
We write $A \asymp B$ to denote that both $A \ll B$ and $B \ll A$ hold. 
For a parameter $\beta$, we write $\ll_\beta$ and $\asymp_\beta$ to mean that the implied constant may depend on $\beta$.
For positive integers $a,A \in \N$, we write $a \sim A$ to denote $A < a \le 2A$. Finally, we use the standard $e(t) := e^{2\pi i t}$. 

\vspace{1.5mm}

\section{Proof of Theorem \ref{main}}
\label{sec:pf_main}

In this section, we prove our main theorem, that 
$P(n)\ll n^{\frac{1}{3}-\frac{2}{177}+\eps}$. 
We do this by establishing bounds for the amount of time the algorithm
spends in dyadic intervals.

\vs

For the rest of this section, fix a (large) positive integer $n$ and a positive integer $a_0$, letting $a_{j+1} = n \pmod{a_j}$ for $j \ge 0$.

\vs

Write 
\[
T(A) := \#\set*{j \ge 0 : a_j\sim A}.
\]

\vs

The first bound we present on $T(A)$ was proven in \cite{shallit} and is due to ``Archimedean" reasons
(namely that the $a_j$ drop quickly near $n$).

\vspace{1.5mm}

\begin{lemma}\label{archbound}
We have $T(A)\le\frac{n}{2A} + 2$.
\end{lemma}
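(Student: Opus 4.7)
The plan is to track the quotients $q_j := \lfloor n/a_j \rfloor$ along the orbit rather than the $a_j$'s directly. The single fact I would prove is that these quotients are \emph{strictly increasing} in $j$: namely $q_{j+1} \ge q_j + 1$ whenever $a_{j+1} > 0$. This should follow from a one-line manipulation of the division identity: writing $n = q_j a_j + a_{j+1}$ with $0 \le a_{j+1} < a_j$, one has
\[
n - (q_j + 1) a_{j+1} = q_j (a_j - a_{j+1}) \ge 0,
\]
which rearranges to $n/a_{j+1} \ge q_j + 1$, so $q_{j+1} = \lfloor n/a_{j+1} \rfloor \ge q_j + 1$.

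Granting the monotonicity, the lemma drops out. List the indices $j_1 < \cdots < j_T$ with $a_{j_i} \sim A$, where $T = T(A)$. Strict monotonicity of the (integer-valued) sequence $q_{j_1} < q_{j_2} < \cdots < q_{j_T}$ immediately yields $q_{j_T} - q_{j_1} \ge T - 1$.

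On the other hand, since each $a_{j_i} \in (A, 2A]$, the associated quotient $q_{j_i} = \lfloor n/a_{j_i} \rfloor$ lies in $[\lfloor n/(2A) \rfloor, n/A)$. In particular $q_{j_1} \ge n/(2A) - 1$ and $q_{j_T} < n/A$, so
\[
T - 1 \;\le\; q_{j_T} - q_{j_1} \;<\; \frac{n}{A} - \left(\frac{n}{2A} - 1\right) \;=\; \frac{n}{2A} + 1,
\]
which is the desired bound $T(A) \le n/(2A) + 2$.

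Since the whole argument collapses onto the strict-monotonicity claim, that is the only step worth thinking about, and it is really a single application of the division algorithm. Everything else is a direct comparison of the first and last quotient at the dyadic scale $A$, exploiting that both are trapped in an interval of length $n/(2A) + O(1)$.
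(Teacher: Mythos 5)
Your proof is correct and follows essentially the same route as the paper: both hinge on the strict monotonicity of the quotients $\lfloor n/a_j\rfloor$, and then trap all quotients at scale $A$ in an interval of length about $n/(2A)$. The only cosmetic difference is that the paper proves the monotonicity by contradiction, whereas you prove it directly via the identity $n - (q_j+1)a_{j+1} = q_j(a_j - a_{j+1}) \ge 0$; the content is the same.
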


\begin{proof}
For $j \ge 0$, let $b_j =  \lfloor \frac{n}{a_j} \rfloor$, so that $\frac{n}{b_j+1} < a_j \le \frac{n}{b_j}$. 
We claim that $b_{j+1} > b_j$ for each $j \ge 0$. 
Indeed, if not, $n = b_ja_j+a_{j+1}$, so $a_{j+1} > \frac{n}{b_j+1}$ implies 
$n(b_j+1)-b_ja_j(b_j+1) > n$, which yields $a_j < \frac{n}{b_j+1}$, a contradiction.
Therefore, since $a_j\sim A$ implies $b_j \in [\frac{n}{2A}-1,\frac{n}{A})$, the 
desired bound follows.
\end{proof}

\vs

Note that Lemma \ref{archbound} combined with the trivial $T(A) \le A$ already establishes the bound 
$P(n)\ll n^{1/2}$ of Shallit \cite{shallit}. The second bound we present improves this trivial bound, by taking advantage of ``arithmetic" properties of the iterative process. It was proven in \cite{erdosshallit}.
We reproduce this proof in our own notation as many of its features 
make their way into the proof our improvement.
\vs

\begin{lemma}\label{arithbound1}
For $1\le A\le n$, we have the bound
\[
    T(A) \ll A^{\frac{1}{2}}n^\eps.
\]
\end{lemma}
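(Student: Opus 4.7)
The plan is to exploit the strict monotonicity of $(a_j)$, which forces the indices $j$ with $a_j \sim A$ to form a contiguous block. Writing the corresponding values as $b_0 > b_1 > \cdots > b_{T-1}$, where $T := T(A)$ and each $b_i \in (A, 2A]$, and setting $q_i := \lfloor n/b_i \rfloor$ so that $b_{i+1} = n - q_i b_i$, a simple rearrangement gives the key identity
\[
(q_i + 1)\, b_i \;=\; n + (b_i - b_{i+1}),
\]
which exhibits $(b_i,\, q_i+1)$ as a divisor pair of an integer lying just above $n$.

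Next comes a pigeonhole step. Telescoping yields $\sum_{i=0}^{T-2}(b_i - b_{i+1}) = b_0 - b_{T-1} < A$, so by a Markov-type inequality at least roughly $T/2$ of the $T-1$ consecutive differences satisfy $b_i - b_{i+1} \le 2A/T$. For each such ``close'' index $i$, the identity above forces the integer $(q_i+1)\, b_i$ into the narrow window $(n,\, n + 2A/T]$, which contains at most $\lfloor 2A/T \rfloor$ integers.

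The finish applies the standard divisor bound $\tau(N) \ll_\eps N^\eps$: the total number of divisor pairs $(b, Q)$ with $bQ = N$ for some $N \in (n,\, n + 2A/T]$ is
\[
\sum_{N \in (n,\, n+2A/T]} \tau(N) \;\ll\; (A/T) \cdot n^\eps.
\]
Since the $b_i$ are distinct, the map $i \mapsto (b_i,\, q_i+1)$ is injective, so this divisor-pair count must absorb at least $\approx T/2$ of the close indices. Hence $T/2 \ll (A/T)\, n^\eps$, which rearranges to $T \ll A^{1/2} n^\eps$. I don't expect any real obstacle; the argument is forced once one notices the identity $(q_i+1)b_i = n + (b_i - b_{i+1})$ and the contiguity of the $a_j \sim A$ block, after which it reduces to a clean averaging plus divisor-bound calculation.
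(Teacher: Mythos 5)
Your proposal is correct and follows essentially the same route as the paper: the identity $(q_i+1)b_i = n + (b_i - b_{i+1})$ is precisely the paper's observation that $a_j \mid n + (a_j - a_{j+1})$, and both arguments then pigeonhole to find $\gg T$ indices with small consecutive difference and invoke the divisor bound on the $\ll A/T$ integers just above $n$ to conclude $T \ll (A/T)n^\eps$.
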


\begin{proof}
If $T(A)\le 1$, we are done, so suppose that $T(A)\ge 2$.
Let 
\[
    \mc J := \set{j\ge 0 : a_j\sim A, a_{j} - a_{j + 1}\le\frac{4A}{T(A)}}.
\]
Note that 
\[
    \sum_{\substack{j\ge 0\\ a_j, a_{j + 1}\sim A}} 1 = T(A) - 1\ge\frac{1}{2}T(A), 
    \sum_{\substack{j\ge 0\\ a_j, a_{j + 1}\sim A}} (a_j - a_{j + 1})\le A.
\]
It follows that 
\[
    \#\set{j\ge 0 : a_j\sim A, a_j - a_{j + 1} > \frac{4A}{T(A)}} < \frac{1}{4}T(A),
\]
so $\#\mc J\ge\frac{1}{4}T(A)$.
Now, note that for all $j$, 
\[
    a_{j + 1}\equiv n \pmod{a_j}\implies a_j | n + a_{j} - a_{j + 1}.
\]
We obtain that 
\[
    T(A)\ll\#\mc J\le
    \sum_{h\le\frac{4A}{T(A)}}\sum_{\substack{a\sim A\\ a | n + h}} 1.
\]
By the divisor bound, $\sum_{\substack{a\sim A\\ a | n + h}} 1\le d(n + h)\ll n^\eps$, so 
we obtain
\[
    T(A)\ll\frac{A}{T(A)}n^\eps.
\]
Rearranging yields the desired result.
\end{proof}

\vs

Together, Lemmas \ref{archbound}, \ref{arithbound1} applied to the ranges
$A\ge n^{2/3}, A\le n^{2/3}$, respectively, give the bound $P(n) \ll n^{\frac{1}{3}+\eps}$. 
To obtain a bound of $n^{\frac{1}{3} - \delta + \eps}$, it 
suffices to show that $T(A)\ll n^{\frac{1}{3} - \delta + \eps}$ for 
$A\in [n^{\frac{2}{3} - 2\delta}, n^{\frac{2}{3} + \delta}]$. This is 
the content of Proposition \ref{arithbound2} for sufficiently small $\delta > 0$. 
To do this, we make use of the arithmetic information obtained by analyzing two consecutive jumps. After using Poisson summation, we are reduced, roughly, to obtaining a power saving over
the trivial bound for the sum 
\[
    \sum_{b\sim n^{1/3}}e\pfrc{n}{b}.
\]
Such bounds follow from standard exponential sum bounds. 
In our case, we use the exponent pair $\prn{\frac{13}{84} + \eps,\frac{55}{84} + \eps}$ 
of Bourgain \cite{B}. Much simpler methods would have also worked, to give a slightly worse
saving over the trivial bound (the van der Corput A-process, followed by the B-process, 
for example). 


\vs

\begin{proposition}\label{arithbound2}

Suppose that $\delta, \lambda > 0$ are such that 
\[
    \delta < \frac{1}{18}, \hspace{2mm} \lambda\le\frac{1}{3} - \delta.
\]
Then, for $n^{\frac{2}{3} - 2\delta}\le A\le n^{\frac{2}{3} + \delta}$, we 
have
\[
    T(A)\ll 
    n^{\frac{1}{3} - \gamma + \eps},
\]
where 
\[
    \gamma := \min\bigg(\lambda - 2\delta, \delta, 
    \frac{4}{63} - \frac{349}{84}\delta - \frac{13}{84}\lambda\bigg).
\]

\end{proposition}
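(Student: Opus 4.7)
The plan is to extend the single-jump averaging argument of Lemma~\ref{arithbound1} so that it simultaneously records \emph{two} consecutive jumps, and then bound the resulting count by Poisson summation together with Bourgain's exponent pair. Set $H := CA/T(A)$ for a suitable absolute constant $C$ and
\[
    \mathcal J_2 := \{j\ge 0 : a_j, a_{j+1}, a_{j+2}\sim A,\; h_j, h_{j+1}\le H\},
\]
where $h_i := a_i - a_{i+1}$. A double-averaging argument analogous to that in Lemma~\ref{arithbound1} shows $|\mathcal J_2| \gg T(A)$, and since $j\mapsto a_j$ is injective, $T(A)$ is controlled by the number of admissible pairs $(a_j,a_{j+1})$.

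For $j\in \mathcal J_2$, write $a_j b = n + h_j$ and $(a_j - h_j) c = n + h_{j+1}$, where necessarily $b,c \sim B := n/A$. Eliminating $a_j$ between the two relations yields the Diophantine identity
\[
    n(c - b) = c\,h_j(b-1) + b\,h_{j+1}.
\]
Setting $k := c - b$, the right side is nonnegative and is $O(HB^2)$, so $0\le k\ll HB^2/n$, a short range in our regime. Denote by $M_k$ the number of $(b,h,h')\in [B, 2B]\times[0,H]^2$ satisfying $nk = (b+k)h(b-1) + bh'$, so that $T(A)\ll \sum_k M_k$. For $k=0$ the identity forces $h=h'=0$ and $b\mid n$, contributing only $\ll n^\eps$ by the divisor bound.

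For $k \ge 1$, reducing the defining equation modulo $b$ yields $h\equiv -n\pmod{b/\gcd(b,k)}$, which in the generic coprime case determines $h$ as $(-n)\bmod b$; then $h'$ is uniquely determined. The condition $h\le H$ is the near-divisibility condition that already appeared in Lemma~\ref{arithbound1}, while $h'\in[0,H]$ further localizes $\{n/b\}$ to a thin $b$-dependent interval of length $\ll H/b^2$ near a target $\approx 1 - kn/b^3$. I would bound the number of admissible $b$ by smoothing the indicator of this fractional-part constraint and expanding it via Poisson summation, producing a main term of the expected size together with an oscillatory contribution controlled by sums of the shape
\[
    \sum_{b\sim B} e\!\left(\ell\,\tfrac{n}{b} + \text{lower order}\right),\qquad 1\le |\ell|\ll B^2/H,
\]
to which Bourgain's exponent pair $(\tfrac{13}{84}+\eps,\tfrac{55}{84}+\eps)$ applies. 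Summing the resulting estimates over $\ell$ and over the short range of $k$, and invoking the hypothesis $A\in[n^{2/3-2\delta},n^{2/3+\delta}]$ together with the choice $H = CA/T(A)$, yields the claimed bound $T(A)\ll n^{1/3-\gamma+\eps}$.

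The main technical obstacle is the bookkeeping needed to balance the three competing constraints that together produce the three-way minimum defining $\gamma$: the term $\lambda - 2\delta$ records the Archimedean cross-over with Lemma~\ref{archbound} in the upper range $A\sim n^{2/3+\delta}$; the term $\delta$ handles the cross-over with the single-jump bound of Lemma~\ref{arithbound1} at the lower boundary $A\sim n^{2/3-2\delta}$; and the term $\tfrac{4}{63} - \tfrac{349}{84}\delta - \tfrac{13}{84}\lambda$ encodes the exponential-sum saving, with $\tfrac{13}{84}$ being exactly the $\kappa$ of Bourgain's pair and the remaining constants coming from the interplay of the Fourier cutoff $B^2/H$ with the allowed ranges of $k$ and $A$. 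A secondary technical point is that the phase of the exponential sum carries genuine lower-order corrections from the $k$-twisted change of variables above; these must be shown small enough to be absorbed into the exponent pair estimate.
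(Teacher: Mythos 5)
Your proposal correctly identifies the high-level structure of the argument — two consecutive small jumps giving $a|n+h$ and $a-h|n+h'$, a short range $0\le k\ll HB^2/n$ for the quotient increment $k=c-b$, a fractional-part constraint on $n/b$, Poisson summation, and Bourgain's exponent pair — and your exact Diophantine identity $n(c-b)=ch(b-1)+bh'$ and the localization of $\{n/b\}$ to a window of length $\asymp H/b^2$ are both correct. However, there is a genuine gap: you do not carry out the crucial ``conductor-lowering'' step. In the paper's proof, the natural constraint $|h-H_0(b,k)|\ll AH/n$ (equivalently, your interval of length $\asymp H/b^2$ on $\{n/b\}$) is deliberately \emph{relaxed} to $|h-H_0(b,k)|\le L$ with $L:=CH_0(n/A,k)\,n^{-\lambda}$, which is valid because $\lambda\le\frac13-\delta$ forces $H_0\,n^{-\lambda}\gg AH/n$. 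The point of this apparently wasteful widening is that it caps the number of surviving Fourier modes after Poisson at $\asymp b/L\asymp Bn^\lambda/H_0$, rather than your $\asymp B^2/H$ (which is larger by a factor $\asymp B/n^\lambda=n^{1/3-\lambda}\cdot(\text{lower order})$), so the $\ell$-summed exponential-sum contribution drops from $\asymp (n/H)^{13/84}B^{55/84}$ per $k$ to $\asymp(An^\lambda/H)^{13/84}B^{55/84}$. The price is a larger zero-frequency term, which is exactly the source of the $\lambda-2\delta$ entry in $\gamma$; the parameter $\lambda$ is the dial controlling this trade-off. Since your argument never introduces $\lambda$, it cannot produce the $\lambda$-dependent exponent $\frac{4}{63}-\frac{349}{84}\delta-\frac{13}{84}\lambda$ in the Proposition; a back-of-envelope computation shows your version gives a strictly smaller feasible $\gamma$ (roughly of order $1/459$ rather than $2/177$ after optimizing $\delta$), so it does not establish the claimed bound. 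Relatedly, your description of the three terms in $\gamma$ is off: the $\delta$ term comes from the reduction to $T(A)\ge T_0=n^{1/3-\delta}$ inside the Proposition's proof (not from a cross-over with Lemma~\ref{arithbound1} in the theorem deduction), and the $\lambda-2\delta$ term is the Poisson zero-frequency contribution (not an Archimedean cross-over).
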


\vs

Before proving Proposition \ref{arithbound2}, let us first quickly spell out how Theorem \ref{main} follows. 

\vs

\begin{proof}[Proof of Theorem \ref{main} assuming Proposition \ref{arithbound2}]
Take 
\[
    \delta = \frac{2}{177}, \hspace{2mm} \lambda = 3\delta.
\]
It is easy to check that $\delta, \lambda$ satisfy the hypotheses of Proposition \ref{arithbound2}.
We have that 
\[
    P(n)\le 1 + \sum_{A\le n} T(A),
\]
where the sum over $A$ runs over only powers of $2$. The contribution of 
$A > n^{\frac{2}{3} + \delta}$ is, by Lemma \ref{archbound}, 
\[
    \ll\sum_{n^{\frac{2}{3} + \delta} < A\le n}\frac{n}{A}\ll n^{\frac{1}{3} - \delta}.
\]
By Lemma \ref{arithbound1}, the contribution of $A < n^{\frac{2}{3} - 2\delta}$ is 
\[
    \ll n^\eps\sum_{A < n^{\frac{2}{3} - 2\delta}} A^{\frac{1}{2}}
    \ll n^{\frac{1}{3} - \delta + \eps},
\]
For $n^{\frac{2}{3} - 2\delta}\le A\le n^{\frac{2}{3} + \delta}$, by Proposition
\ref{arithbound2}, we have that 
\[
    T(A)\ll n^{\frac{1}{3} - \gamma + \eps},
\]
where 
\[
    \gamma = \min\bigg(\lambda - 2\delta, \delta, 
    \frac{4}{63} - \frac{349}{84}\delta - \frac{13}{84}\lambda\bigg) = \frac{2}{177}.
\]
Then, summing over $A$ in $[n^{\frac{2}{3} - 2\delta}, n^{\frac{2}{3} + \delta}]$ 
at the harmless cost of $O(\log n)$, Theorem \ref{main} follows.
\end{proof}

\begin{proof}[Proof of Proposition \ref{arithbound2}]
Suppose that $T(A)\ge T_0 = n^{\frac{1}{3} - \delta}$, for we are done otherwise. 
Let $m$ be so that $a_{m + T(A)}\le A < a_{m + T(A) - 1} < \dots < a_m\le 2A$.
Then, as in the proof of Lemma \ref{arithbound1}, for a positive proportion of $m + 2\le j < m + T(A)$, we have that 
\[
    a_{j - 2} - a_{j}\le H := \frac{10A}{T_0}.
\]
We record the bound $n^{\frac{1}{3} - \delta}\ll H\ll n^{\frac{1}{3} + 2\delta}$.
Write 
\[
    \mc J = \set*{m + 2\le j < m + T(A) : a_{j -2} - a_j\le H}.
\]
Consider some $j\in\mc J$, and write 
$a = a_{j - 2}, a - h =  a_{j - 1}, a - h - h' = a_{j}$. 
Then, as in the proof of Lemma \ref{arithbound1}, we have
\[
    a | n + h, a - h | n + h'.
\]
In particular, there exist $b\asymp n/A, k$ such that 
$ab = n + h, (a - h)(b + k) = n + h'$.

Also, note that 
\[
    |(b + k)h - ak| = |ab - (a - h)(b + k)|\ll H,
\]
so rearranging, we have
\[
    h = \frac{ak}{b + k} + O\pfrc{AH}{n}
     = \frac{abk}{b(b + k)} + O\pfrc{AH}{n} = \frac{nk}{b(b + k)} + O\pfrc{AH}{n}
\]
since $Hk/B^2\ll H/B = AH/n$.
Write $H_0(b, k) := \frac{nk}{b(b + k)}$. 
Recall that $\lambda\le\frac{1}{3} - \delta$, so for $b\asymp\frac{n}{A}$
\[
    H_0(b, k)n^{-\lambda}\ge\frac{H_0(b, k)}{T_0}\gg\frac{A^2}{nT_0}\asymp\frac{AH}{n}.
\]
It follows for some sufficiently large $C > 0$ that
$\charf{|h - H_0(b, k)|\ll AH/n}\le\charf{|h - H_0(b, k)|\le L}$ with $L := CH_0(n/A, k)n^{-\lambda}$.
The reason for this apparently wasteful bound is to lower the ``analytic conductor" of the 
phase in the resulting exponential sum so that we may get superior savings when
we execute the sum over $b$. It follows that
\begin{align*}
    \#\mc J
    &\le\sum_{|k|\ll Hn/A^2}\sum_{h\le H}\sum_{\substack{b | n + h\\ b\asymp n/A}}
    \charf{|h - H_0(b, k)|\ll\frac{AH}{n}}\\
    &\le\sum_{|k|\ll Hn/A^2}\sum_{h\le H}\sum_{\substack{b | n + h\\ b\asymp n/A}}
    \charf{|h - H_0(b, k)|\le L}.
\end{align*} 
Take some smooth even $w$ so that $\charf{[-1, 1]}\le w\le\charf{[-2, 2]}$.
Then, we have
\[
    \sum_{|k|\ll Hn/A^2}\sum_{h\le H}\sum_{\substack{b | n + h\\ b\asymp n/A}}
    \charf{|h - H_0(b, k)|\le L}
    \le \sum_{|k|\ll Hn/A^2}\sum_{b\asymp n/A}\sum_{h\equiv -n(b)}
    w\pfrc{h - H_0(b, k)}{L}.
\]
By Poisson summation, 
\begin{align*}
    \sum_{|k|\ll Hn/A^2}&\sum_{b\asymp n/A}\sum_{h\equiv -n(b)}
    w\pfrc{h - H_0(b, k)}{L}\\
    &= \sum_{|k|\ll Hn/A^2}\sum_{b\asymp n/A}\frac{L}{b}
    \sum_{r\in\ZZ}e\pfrc{r(n + H_0)}{b}\hat w\pfrc{Lr}{b}.
\end{align*}
The contribution of the zero frequency, $r=0$, is 
\[
    \ll \frac{Hn}{A^2}\cdot\frac{n}{A}\cdot\frac{Hn^{-\lambda}}{n/A}\ll n^{\frac{1}{3} + 2\delta -\lambda},
\]
which is acceptable. It remains to bound the contribution when $|r| > 0$,
so we restrict to that case from now on.

A quick computation shows that for $x_0\asymp n/A$, we have that for some constant 
\[
    \frac{\text d^j}{\text dx^j}\left. \frac{r(n + H_0(x, k))}{x} \right|_{x = x_0}\asymp_j rAx_0^{-j}
\]
uniformly in $|k| \ll Hn/A^2$.

\vs

By Theorem 6 of \cite{B}, we have the exponent pair 
$(\frac{13}{84} + \eps, \frac{55}{84} + \eps)$
(see \S8.4 of \cite{IK} for a definition; note that in 
the notation of \cite{IK}, we instead have the exponent pair
$(13/84, 13/84)$). By partial summation 
(see, e.g., \cite[Lemma 2.2]{mrt}), the fact that $\hat w, (\hat w)'$ are Schwartz, 
and that $|r| > 0$ (which implies that $|r|A\gg n/A$, so (8.56) of \cite{IK} holds), 
we have for some $c > 0$ that
\begin{align*} 
    \bigg|\sum_{b\asymp n/A}&e\pfrc{r(n + H_0(b, k))}{b}\frac{n/A}{b}
    \hat w\pfrc{Lr}{b}\bigg|\\
    &\ll\bigg(1 + \bigg|\frac{Lr}{n/A}\bigg|\bigg)^{-2022}
    \sup_{t\ll n/A}\bigg|\sum_{cn/A < b\le t}e\pfrc{r(n + H_0(b, k))}{b}\bigg|\\
    &\ll\bigg(1 + \bigg|\frac{Lr}{n/A}\bigg|\bigg)^{-2022}
    \pfrc{A^2|r|}{n}^{\frac{13}{84} + \eps}\pfrc{n}{A}^{\frac{55}{84} + \eps}.
\end{align*}
Putting this all together, we obtain that 
\begin{align*}
    \bigg|&\sum_{|k|\ll Hn/A^2}\sum_{b\asymp n/A}\frac{L}{n/A}
    \sum_{r\ne 0}e\pfrc{r(n + H_0(b, k))}{b}\frac{n/A}{b}\hat w\pfrc{Lr}{b}\bigg|\\ 
    &\ll\frac{Hn}{A^2}\cdot\pfrc{An^\lambda}{H}^{\frac{13}{84}}\pfrc{n}{A}^{\frac{55}{84}}
    \cdot n^\eps\ll \frac{n}{AT_0} 
    n^{\frac{13}{84}\lambda}T_0^{\frac{13}{84}}\pfrc{n}{A}^{\frac{55}{84}}n^\eps\\
    &\ll n^{3\delta}\cdot n^{\frac{13}{84}\lambda} 
    n^{\frac{1}{3}\cdot\frac{13}{84} - \frac{13}{84}\delta}
    n^{\frac{1}{3}\cdot\frac{55}{84} + \frac{55}{84}\cdot 2\delta} \\
    &\ll n^{\frac{1}{3} - \frac{4}{63} + \frac{349}{84}\delta + \frac{13}{84}\lambda + \eps}.
\end{align*}
The desired result follows.
\end{proof}

\section{Proof of Theorem \ref{lowerbound}}
\label{sec:pf_lower}
In this section, we prove Theorem \ref{lowerbound}, repeated below for the reader's convenience. 

\begin{thm-repeat}
We have the lower bound
\[
    P(n)\gg\frac{\log n}{\log\log n}
\] 
for all sufficiently large $n$.
\end{thm-repeat}

\vspace{1.5mm}

Shallit \cite{shallit} and Erd\H{o}s-Shallit \cite{erdosshallit} established lower bounds for $P(n)$ of $c\frac{\log n}{\log\log n}$ and $c\log n$, respectively, (only) for positive integers $n$ such that $n+1$ is divisible by all sufficiently small positive integers. Such positive integers $n$ will cause the process $x \mapsto n \pmod{x}$ to repeatedly decrement by $1$ at the end. We establish a lower bound that is valid for all positive integers by choosing a starting number based on $n$ that causes the process $x \mapsto n \pmod{x}$ to repeatedly decrement by $1$ at the beginning, for ``Archimedean" reasons rather than ``arithmetic" ones. 

\vs

We will need the following elementary lemma. 

\begin{lemma}\label{elementaryinequality}
There exists $c > 0$ so that the following holds for sufficiently large $n \in \N$. For any $k \in \N$ with $k\le c\frac{\log n}{\log \log n}$, one has
$$(-1)^k k!\left(\sum_{j=0}^k \frac{(-1)^j}{j!}-\frac{1}{e}\right)n > \frac{n}{k+2}+k!.$$
\end{lemma}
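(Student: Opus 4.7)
The plan is to unfold the parenthesized expression using the Taylor series for $1/e$ and then show that, after multiplication by $(-1)^k k!$, the result is exactly $\tfrac{1}{k+2}$ plus a positive error of order $k^{-3}$, which under the hypothesis $k \le c \log n / \log \log n$ will dominate $k!/n$.

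Concretely, from $\frac{1}{e} = \sum_{j=0}^\infty (-1)^j/j!$ one writes
\[
\sum_{j=0}^k \frac{(-1)^j}{j!} - \frac{1}{e} \;=\; -\sum_{j=k+1}^\infty \frac{(-1)^j}{j!},
\]
so that multiplying by $(-1)^k k!$ and reindexing via $i=j-k-1$ yields the positive alternating series
\[
(-1)^k k!\left(\sum_{j=0}^k \frac{(-1)^j}{j!} - \frac{1}{e}\right) \;=\; \sum_{i=0}^\infty \frac{(-1)^i\,k!}{(k+1+i)!} \;=\; \frac{1}{k+1} - \frac{1}{(k+1)(k+2)} + \frac{1}{(k+1)(k+2)(k+3)} - \cdots.
\]
The first two terms collapse to $\tfrac{1}{k+2}$, and the remaining alternating tail has strictly decreasing positive summands, so pairing the next two gives the lower bound
\[
(-1)^k k!\left(\sum_{j=0}^k \frac{(-1)^j}{j!} - \frac{1}{e}\right) \;\ge\; \frac{1}{k+2} + \frac{1}{(k+1)(k+2)(k+4)} \;\ge\; \frac{1}{k+2} + \frac{c_0}{k^3}
\]
for some absolute $c_0 > 0$ and all $k \ge 1$. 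Multiplying by $n$, the lemma reduces to the inequality $c_0 n / k^3 > k!$.

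The final step is to verify this under the hypothesis $k \le c \log n / \log \log n$. By Stirling, $\log(k^3 \cdot k!) = (1+o(1)) k \log k$, and the hypothesis gives
\[
k \log k \;\le\; \frac{c \log n}{\log \log n} \cdot \log\!\Big(\frac{c \log n}{\log \log n}\Big) \;\le\; (c + o(1)) \log n
\]
as $n \to \infty$. Thus $k^3 \cdot k! \le n^{c + o(1)}$, which is much smaller than $c_0 n$ provided $c$ is chosen, once and for all, to be any constant strictly less than $1$ (e.g.\ $c = 1/2$) and $n$ is large enough.

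The main point to watch is the sign bookkeeping in the first display: the factor $(-1)^k$ is precisely what makes the alternating tail positive so that one may extract the leading $\tfrac{1}{k+2}$ cleanly. After that, everything is elementary; the only inequality with any content is the Stirling estimate balancing $k!$ against $n$, and the logarithmic bound on $k$ is tailored exactly to make it hold.
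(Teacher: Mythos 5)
Your proof is correct and takes essentially the same route as the paper: both expand the remainder of the power series for $e^{-1}$, extract the leading $\tfrac{1}{k+2}$ contribution (the paper normalizes by multiplying through by $k+2$ first), lower-bound the remaining alternating tail by a $\Theta(k^{-3})$ quantity, and then balance $k!\cdot k^{O(1)}$ against $n$ via a Stirling-type estimate under $k \le c\log n/\log\log n$. Your version is somewhat more explicit in justifying the alternating-series lower bound, but the underlying argument is identical.
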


\begin{proof}
Note, from the power series for $e^{-1}$, that 
\[
    (-1)^k(k + 2)k!\left(\sum_{j = 0}^k \frac{(-1)^j}{j!} - \frac{1}{e}\right) = 
    1 + \frac{1}{(k + 3)(k + 1)} - O\pfrc{1}{k^3} = 1 + \frac{1}{k^2} - O\pfrc{1}{k^3},
\]
which is greater than $1+\frac{k!(k+2)}{n}$ for sufficiently large $n$, by assumption. 
\end{proof}

\vs

\begin{proof}[Proof of Theorem \ref{lowerbound}]
By adjusting the implied constant, we may assume $n$ is sufficiently large. Let $a = \lfloor (1-\frac{1}{e})n \rfloor$, $a_0 = a$, and $a_{k+1} = n \pmod{a_k}$ for $k \ge 0$. Let $b_0 = (1-\frac{1}{e})n$ and $b_k = (-1)^k k!\left(\sum_{j=0}^k \frac{(-1)^j}{j!}-\frac{1}{e}\right)n$ for $k \ge 1$. 

\vs

We show $P(a,n) \ge c\frac{\log n}{\log\log n}$, where $c > 0$ is as in Lemma 4.3. 

\vs

We prove inductively that $|a_k-b_k| \le k!$ and $a_k = n-ka_{k-1}$. For $k=0$, the first is clearly true. The second is true for $k=1$ and thus so is the first. Now assume they are both true for some $k \ge 1$. We have by Lemma \ref{elementaryinequality} that $a_k \ge b_k-k! > \frac{n}{k+2}$. Since $\lfloor \frac{n}{a_k} \rfloor$ must strictly increase, we have $a_k < \frac{n}{k+1}$. Therefore, $a_{k+1} = n-(k+1)a_k$ and thus $\left|a_{k+1}-b_{k+1}\right| = \left|(n-(k+1)a_k)-(n-(k+1)b_k)\right| = (k+1)\left|a_k-b_k\right| \le (k+1)!$. We have thus shown $a_k > \frac{n}{k+2} > 0$ as long as $k \le c\frac{\log n}{\log \log n}$. 
\end{proof}

\vs

\section{Acknowledgments}

The first author would like to thank his advisor, Ben Green, for encouragement, and Jeffrey Shallit for striving to revive study of this problem. 


\vs

\vs


\begin{thebibliography}{10}

\bibitem{B} J. Bourgain. Decoupling, exponential sums and the Riemann zeta function, J. Am. Math. Soc. 30(1), 205-224 (2017).

\bibitem{erdosshallit} 
P. Erd\H{o}s, J. Shallit. New bounds on the length of finite Pierce and Engel series, S\'em. Th\'eor. Nombres Bordeaux (2) 3, 43–53 (1991).

\bibitem{IK} Iwaniec, H., Kowalski E., Analytic number theory, Amer. Math.
Soc. Colloquium Publ. 53, Amer. Math. Soc., Providence RI, 2004

\bibitem{mrt}
 K. Matomäki, M. Radziwill, T. Tao. Correlations of the von Mangoldt and higher divisor functions I. Long shift ranges. Proc. London Math. Soc. 118, 284–350 (2019).

\bibitem{pierce}
T. A. Pierce. On an Algorithm and Its Use in Approximating Roots of Al- gebraic Equations, Amer, Math. Monthly 36, 523-525 (1929).

\bibitem{shallit}
J. O. Shallit. Metric theory of Pierce expansions, Fibonacci Quart. 24, 22-40 (1986).

\end{thebibliography}
\end{document}